\documentclass[10pt]{article}
\usepackage{latexsym}
\usepackage{amssymb}
\usepackage{amsthm}
\usepackage{authblk}
\usepackage[english,french]{babel}
\newenvironment{poliabstract}[1]
   {\begin{abstract}}
   {\end{abstract}}
\newtheorem{theorem}{Theorem}

\newtheorem{proposition}[theorem]{Proposition}

\date{}

\begin{document}
\title{\small{TRANSCENDENCE OF ZEROS OF AUTOMORPHIC FORMS FOR CUSPIDAL TRIANGLE GROUPS\\
To be published in: Mathematical Reports of the Academy of Science
of the Royal Society of Canada}}
\author{\small{Paula Tretkoff}\footnote{The maiden name of the author is Paula B. Cohen. She authored \cite{Co1} and coauthored \cite{CoWo1}.}}
\maketitle
\selectlanguage{english}
\begin{poliabstract}{Abstract} {We extend some results of \cite{GMR1} on elliptic modular forms. We take \emph{any} Fuchsian triangle group with a cusp and look at power series expansions in a natural parameter around that cusp. Consider the automorphic forms for such a triangle group  whose power series expansions in the natural parameter have algebraic coefficients. We show that the zeros of such forms are either transcendental, or are ``CM." By ``CM," we mean they correspond  to abelian varieties with complex multiplication. This result is the first of its kind in the case of non-arithmetic groups.}\end{poliabstract}
\selectlanguage{french}
\begin{poliabstract}{R\'esum\'e}{Nous \'etendons certains r\'esultats de \cite{GMR1} sur les formes modulaires elliptiques. Nous prenons un groupe fuchsien triangulaire \emph{quelconque} avec une pointe et examinons les d\'eveloppements en s\'erie de puissance dans un param\`etre naturel autour de cette pointe. Consid\'erons les formes automorphes pour un tel groupe triangulaire dont les d\'eveloppements en s\'erie de puissance dans le param\`etre naturel ont des coefficients alg\'ebriques. Nous montrons que les z\'eros de telles formes sont soient transcendants soient ``CM". Par ``CM," nous voulons dire qu'ils correspondent \`a des vari\'et\'es ab\'eliennes \`a multiplication complexe. Ce r\'esultat est le premier du genre au cas des groupes non-arithm\'etiques.}\end{poliabstract}
\selectlanguage{english}

\medskip

\noindent {\scriptsize{{\bf AMS Subject Classification:} 11J81, 11J91, 20H10}}

\smallskip

\noindent {\scriptsize{{\bf Keywords:} Transcendence, Automorphic Forms, Modular Embedding, Non-arithmetic Group}}

\medskip

A point $\tau$ in the upper half plane $\mathcal{H}$ such that $[\mathbb{Q}(\tau):\mathbb{Q}]=2$ is called an imaginary quadratic point of $\mathcal{H}$. It is also called a complex multiplication (CM) point since there are endomorphisms of the lattice $\mathcal{L}_\tau=\mathbb{Z}+\tau\mathbb{Z}$ given by multiplications by complex numbers that are not real. The fractional linear action of ${\rm PSL}(2,\mathbb{Z})$ on $\mathcal{H}$ replaces $\mathcal{L}_\tau$ by a lattice $\lambda\mathcal{L}_\tau$, for a suitable complex number $\lambda\not=0$, which clearly also has complex multiplications.

Let $p, q$ be integers or infinity with $2\le p\le q\le\infty$ and $p^{-1}+q^{-1}<1$. A result of Takeuchi \cite{Tak1} says that there are exactly nine signatures $(p,q,\infty)$ such that any (Fuchsian) triangle group with that signature is arithmetic. A suitable conjugate in ${\rm PSL}(2,\mathbb{R})$ of any such arithmetic group is commensurable with ${\rm PSL}(2,\mathbb{Z})$, which has signature $(2,3,\infty)$. There are, clearly, infinitely many signatures $(p,q,\infty)$ such that triangle groups with that signature are non-arithmetic.

Let $f$ be a non-zero modular form for ${\rm PSL}(2,\mathbb{Z})$, with Fourier coefficients algebraic numbers. By  \cite{GMR1}, Theorem 1, any zero of $f$ is either transcendental or imaginary quadratic. We extend this result in \S\ref{s:mainresults}, Theorem \ref{t:zeros}, to zeros of non-zero automorphic forms for any Fuchsian triangle group with a cusp. In the non-arithmetic case, we need to replace the usual Fourier expansion by an expansion in another natural parameter at the cusp at $i\infty.$ We also need a suitable generalization of the notion of ``CM point," see \S\ref{s:modularembedding}. Overall, the modular embedding techniques of \cite{CoWo1} are what allow us to relax the assumption that the triangle group is arithmetic. Our result on zeros of automorphic forms seems to be the first of its nature for non-arithmetic groups.

\section{Automorphic Forms and \emph{Hauptmodulen}}\label{s:automorphic}

We begin with some remarks, some of which are classical facts, whereas others have their origin in \cite{DGMS} and the older reference \cite{WoAnal}. We will not directly quote \cite{WoAnal}, but refer the reader to \cite{DGMS} for appropriate acknowledgements. The Fuchsian groups considered from now on are triangle groups with cusps. Assume that $p$, $q$ are either integers or infinity, with $2\le p\le q$ and $\frac1p+\frac1q < 1$. A \emph{(Fuchsian) triangle group} with signature $(p,q,\infty)$ is by definition a subgroup of ${\rm PSL}(2,\mathbb{R})$ with presentation in terms of generators and relations given by:
$$
\langle M_1, M_2, M_3 : M_1^p=M_2^q=M_1M_2M_3=1\rangle.
$$
Let $\mathcal{H}$ be the upper half plane, that is the set of complex numbers $\tau$ with positive imaginary part. A Fuchsian triangle group acts on $\mathcal{H}$ by fractional linear transformations. The presentation determines the group up to conjugacy, and such a group has a fundamental region whose closure consists of two contiguous hyperbolic triangles with vertex angles $\frac{\pi}p$, $\frac{\pi}q$, $0$ (here, we take $\frac1\infty=0$). We fix the triangle group by specifying that its fundamental region must contain the triangle with vertices $\zeta_1=-\exp(-\pi i/p)$, $\zeta_2=\exp(\pi i/q)$, $\zeta_3=i\infty$ (again $\frac1\infty=0$). Let $\Gamma=\Gamma(p,q,\infty)$ be this triangle group with $\zeta_i$ the fixed point of $M_i$, $i=1,2,3$. The transformation $M_3$ is the translation $\tau\mapsto \tau+h_3$, where
$$
h_3=2\cos(\pi/p)+2\cos(\pi/q).
$$ 
The number $h_3$ is called the cusp width (of the cusp $\zeta_3$).

Let $f=f(\tau)$ be a meromorphic function on $\mathcal{H}$, and let $k$ be a positive \emph{even} integer. For $\gamma$ in ${\rm PSL}(2,\mathbb{R})={\rm SL}(2,\mathbb{R})/\{\pm{\rm Id}_2\}$, with representative $\begin{pmatrix}{a&b\cr c&d}\end{pmatrix}$ in ${\rm SL}(2,\mathbb{R})$, define 
\begin{equation}\label{e:act}
(f\mid_k\gamma)(\tau): =(c\tau +d)^{-k}f\left(\frac{a\tau+b}{c\tau+d}\right),\qquad {\rm for\;all}\;\tau\in{\mathcal H}.
\end{equation}
A \emph{holomorphic automorphic form of weight $k>0$ with respect to $\Gamma(p,q,\infty)$} is defined as follows. It is a function $f$ on $\mathcal{H}$ such that 
\begin{equation}\label{e:autk}
(f\mid_k\gamma)(\tau)=f(\tau),\qquad {\rm for\;all}\quad \tau\in\mathcal{H}\quad{\rm and\;all} \quad \gamma\in\Gamma(p,q,\infty),
\end{equation}
which is holomorphic on ${\mathcal H}$ and at \emph{all} the cusps of $\Gamma(p,q,\infty)$.  See \cite{DGMS}, \S2.1, for the definition of meromorphic and of holomorphic at a cusp. As $f$ is invariant under the transformation $\tau\mapsto \tau+h_3$, $\tau\in\mathcal{H}$, it has a Fourier expansion of the form
\begin{equation}\label{e:f1}
f(\tau)=\sum_{n\in\mathbb{Z}}a_n\exp(2\pi in\tau/h_3).
\end{equation}
As it is holomorphic at $\zeta_3=i\infty$, we have $a_n=0$ for all negative integers $n$. If we only require $f$ to be meromorphic at $\zeta_3=i\infty$, we allow it to have finitely many non-zero Fourier coefficients $a_n$ with $n$ negative. 

Let $f$ be holomorphic on ${\mathcal H}$, meromorphic at the cusps of $\Gamma(p,q,\infty)$, and invariant under the action of $\Gamma(p,q,\infty)$ by fractional linear transformations, so that $k=0$ in (\ref{e:autk}). We call $f$ an automorphic function. The automorphic functions for $\Gamma(p,q,\infty)$ form a field generated by any one such function, called a \emph{Hauptmodul}.  A \emph{Hauptmodul} for ${\rm PSL}(2,\mathbb{Z})$, which has signature $(2,3,\infty)$, is given by the classical elliptic modular function $j$. The choice of \emph{Hauptmodul} is unique once we specify a suitable normalization. Following \cite{DGMS}, we choose the \emph{Hauptmodul} $J_\Gamma$, $\Gamma=\Gamma(p,q,\infty)$, uniquely determined by the conditions
$$
J_\Gamma(\zeta_1)=1,\qquad J_\Gamma(\zeta_2)=0,\qquad J_\Gamma(\zeta_3)=\infty.
$$
(For $\Gamma(2,3,\infty)$, this gives $j/1728$.)
Wolfart \cite{WoAnal} was the first to investigate arithmetic properties of expansions around elliptic fixed points, and Fourier expansions about cusps, of automorphic forms and functions with respect to \emph{all} Fuchsian triangle groups. We focus on Fourier expansions around $\zeta_3=i\infty$. Wolfart proved that certain transcendence properties of the Fourier coefficients $a_n$ in (\ref{e:f1}) reflect whether or not the group $\Gamma(p,q,\infty)$ is arithmetic. Namely, the number $a_n$ is algebraic if and only if this group is arithmetic. More precisely, the number $a_n$ is of the form $b_n\alpha^n$, with $b_n$ algebraic and $\alpha$ transcendental, if it is non-arithmetic, see \cite{DGMS}. Let $\alpha_3$ be given by the closed formula in \cite{DGMS}, Theorem 1, see also \cite{WoAnal}. Then $\alpha_3$ is algebraic if $\Gamma(p,q,\infty)$ is arithmetic, and $\alpha_3/\alpha$ is algebraic if $\Gamma(p,q,\infty)$ is non-arithmetic. Let ${\widetilde q}_3=\alpha_3\exp(2 \pi i\tau/h_3)$. Again by  \cite{DGMS}, Theorem 1, the Fourier expansion of $J_\Gamma$ about $\zeta_3=i\infty$ is of the form
$$
{\widetilde q}_3^{\;-1}+\sum_{k=0}^\infty c_k{\widetilde q}_3^{\;k},
$$
with the $c_k$ polynomials (with signature independent coefficients) in $\mathbb{Q}[\frac1p, \frac1q]$. In particular, the above Fourier expansion lies in ${\widetilde q}_3^{\;-1}+\overline{\mathbb{Q}}[\![{\widetilde q}_3]\!]$. 

Let $L$ be the least common multiple of $p$ and $q$ if both these integers are finite. If $p$ is finite and $q=\infty$, let $L$ equal $p$, and if both $p$ and $q$ are infinite, let $L=1$. By \cite{DGMS}, Theorem 2, there is a closed formula for a holomorphic automorphic form $\Delta_{2L}$ of weight $2L$ which is the analogue of the discriminant form of weight $12=2{\rm lcm}(2,3)$ for the classical modular group ${\rm PSL}(2,\mathbb{Z})$ of signature $(p,q,\infty)=(2,3,\infty)$.  By \cite{DGMS}, Theorem 2, equation (2.17), we have
$$
\Delta_{2L}(\tau)=(-1)^L\left({\widetilde{q}}_3\frac{d}{d{\widetilde{q}}_3}J_\Gamma\right)^L\left(J_\Gamma\right)^{{\lceil\frac{L}{q}\rceil}-L}\left(J_\Gamma-1\right)^{{\lceil\frac{L}{p}\rceil}-L}
$$
\begin{equation}\label{e:delta}
={\widetilde{q}}_3^{\;{n_\Delta}}+O\left({\widetilde{q}}_3^{\;{n_\Delta + 1}}\right),
\end{equation}
where the Fourier expansion on the right hand side of the above equation has coefficients rational numbers and $n_\Delta=L(1-\frac1p-\frac1q)$. The automorphic form $\Delta_{2L}$ has no zeros in $\mathcal{H}$ nor at the $\Gamma$-orbits of the cusps not in the orbit $\Gamma(i\infty)$.

\section{Modular Embedding}\label{s:modularembedding}

To state our main results in \S\ref{s:mainresults}, we need the analog, for arbitrary signature $(p,q,\infty)$, of the classical notion of a complex multiplication (CM) point for the signature $(2,3,\infty)$. For this, and for the proofs of our results in \S\ref{s:mainresults}, we use the \emph{modular embedding} techniques of \cite{CoWo1}. For the nine arithmetic triangle groups of signature $(p,q,\infty)$, the CM points we define shortly all correspond to abelian varieties that are isogeneous to powers of a single elliptic curve with complex multiplication. This fact is not surprising, as these arithmetic groups are all commensurable with a subgroup of a triangle group with signature $(2,3,\infty)$. They are called \emph{modular triangle groups} in \cite{DGMS}. 

By the techniques of \cite{CoWo1}, for all $\Gamma(p,q,\infty)$ there is a positive integer $g$, a totally real number field $\mathbb{F}$ of degree $g$ over $\mathbb{Q}$, and an order $\mathcal{O}$ in $\mathbb{F}$ with the following property. There is a complex analytic embedding
$$
\varphi:\mathcal{H}\hookrightarrow{\mathcal{H}}^g
$$
and a group embedding
$$
\iota:\Gamma(p,q,\infty)\hookrightarrow\Gamma_{\mathbb{F}}
$$
such that 
$$
\varphi(\gamma(\tau))=\iota(\gamma)\varphi(\tau),\qquad {\rm for\;all}\quad \tau\in{\mathcal{H}},\quad \gamma\in \Gamma(p,q,\infty).
$$
Here, the group $\Gamma_{\mathbb{F}}$ is a suitable subgroup of ${\rm PSL}(2,\mathbb{F})$ commensurable with ${\rm PSL}(2,\mathcal{O})$. We can, and do, assume that $\varphi_1(\tau)=\tau$, for all $\tau\in\mathcal{H}$, where $\pi_i$ is the projection onto the $i$th factor of ${\mathcal H}^g$, and $\varphi_i(\tau):=(\pi_i\circ\varphi)(\tau)$, $i=1,\ldots,g$. We call $\varphi$ a \emph{modular embedding}, as the quotient space $\Gamma_{\mathbb{F}}\backslash{\mathcal{H}}^g$ is the coarse moduli space for an analytic family of complex abelian varieties of dimension $g$ whose endomorphism algebra contains a subfield isomorphic to $\mathbb{F}$, see \cite{Shi1}. 

We call $z=(z_1,\ldots,z_g)\in\mathcal{H}^g$ a \emph{CM point} if the $\Gamma_{\mathbb{F}}$-orbit of $z$ corresponds to an isomorphism class of abelian varieties with CM. Recall that an abelian variety $B$ of dimension $n$ has \emph{CM (complex multiplication)} if and only if there is a number field $\mathbb{L}$ contained in its endomorphism algebra ${\rm End}_0(B)={\rm End}(B)\otimes \mathbb{Q}$ with $[\mathbb{L}:\mathbb{Q}]=2n$. The field $\mathbb{L}$ is then a CM field, that is, a totally imaginary quadratic extension of a totally real number field. We call $\tau\in{\mathcal{H}}$ a \emph{CM point} if $\varphi(\tau)$ is a CM point. 

The analytic space $\Gamma_{\mathbb{F}}\backslash{\mathcal{H}}^g$ is isomorphic to the complex points $V(\mathbb{C})$ of a quasi-projective variety $V$ defined over $\overline{\mathbb{Q}}$, called a Hilbert modular variety. We assume that $V$ is embedded into a suitable projective space. There is a $\Gamma_{\mathbb{F}}$-invariant holomorphic map $J:{\mathcal{H}}^g\rightarrow V(\mathbb{C})$ that takes values in $V({\overline{\mathbb{Q}}})$ at all complex multiplication points of $\mathcal{H}^g$. 

From the joint results of \cite{Co1} and \cite{SW1} (see, for example Proposition 4, \S4, of the latter reference), we have the following criterion for a point of ${\mathcal H}^g$ to be CM.

\begin{proposition}\label{p:CMcharac} A point $z=(z_1,\ldots,z_g)\in\mathcal{H}^g$ is a CM point if and only if we have \emph{\bf both} $z_i\in{\overline{\mathbb{Q}}}$ for \emph{\bf {at least one}} $i=1,\ldots,g$ \emph{\bf and} $J(z)\in V(\overline{\mathbb Q})$.
\end{proposition}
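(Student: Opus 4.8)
The plan is to derive Proposition~\ref{p:CMcharac} by combining the transcendence-theoretic input of \cite{Co1} with the linear-algebra/Shimura-variety characterization of CM points in \cite{SW1}. First I would observe that the ``only if'' direction is essentially the definition together with standard facts: if $z$ is a CM point, then $J(z)\in V(\overline{\mathbb Q})$ is immediate since $J$ takes values in $V(\overline{\mathbb Q})$ at CM points (as recalled in the paragraph preceding the statement), and the existence of at least one algebraic coordinate $z_i$ follows because the period relations forced by complex multiplication make the relevant period ratio an algebraic number on the corresponding factor — this is the easy, classical half. The substantive content is the ``if'' direction.

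For the ``if'' direction I would argue by contradiction: suppose $z=(z_1,\dots,z_g)$ has $z_i\in\overline{\mathbb Q}$ for some $i$ and $J(z)\in V(\overline{\mathbb Q})$, but the abelian variety $B_z$ attached to the $\Gamma_{\mathbb F}$-orbit of $z$ does not have CM. The hypothesis $J(z)\in V(\overline{\mathbb Q})$ means $B_z$ is (isomorphic to) an abelian variety defined over $\overline{\mathbb Q}$, with the $\mathbb F$-action also defined over $\overline{\mathbb Q}$. One then brings in the analytic uniformization $J\colon \mathcal H^g\to V(\mathbb C)$: a point $z$ in the preimage of an algebraic point encodes a period matrix, and the coordinate $z_j$ on the $j$th factor is a ratio of periods of a fixed differential on the $j$th ``isotypic piece'' of $B_z$. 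The key transcendence statement from \cite{Co1} — this is the analogue for Hilbert/quaternionic modular varieties of the Schneider–Siegel–Shidlovskii-type results for elliptic periods — says that if $B_z$ is defined over $\overline{\mathbb Q}$ and does \emph{not} have CM, then \emph{none} of the period ratios $z_1,\dots,z_g$ can be algebraic; equivalently, $z_j\in\overline{\mathbb Q}$ for even a single $j$ forces the $j$th factor (hence, by the $\mathbb F$-action permuting/relating factors, all of $B_z$) to acquire extra endomorphisms, i.e.\ to be of CM type. This contradicts our assumption, so $B_z$ has CM and $z$ is a CM point. The cleanest way to package this is to quote Proposition~4, \S4 of \cite{SW1} verbatim: that proposition already assembles the analytic set-up, identifies the period coordinates, and invokes \cite{Co1} to conclude exactly the biconditional we want; our job is just to note that the modular embedding of \S\ref{s:modularembedding} places our $\Gamma_{\mathbb F}\backslash\mathcal H^g$ into precisely the class of Shimura varieties to which that proposition applies.

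Concretely, the steps in order are: (1)~record that $\Gamma_{\mathbb F}\backslash\mathcal H^g$ with its universal family of abelian varieties with $\mathbb F$-multiplication is a Shimura variety of the type handled in \cite{SW1}, so that $J$ and $V/\overline{\mathbb Q}$ are the objects in loc.\ cit.; (2)~fix $z$ and let $B_z$ be the associated abelian variety, noting that $J(z)\in V(\overline{\mathbb Q})$ is equivalent to $B_z$ (with its $\mathbb F$-structure) being definable over $\overline{\mathbb Q}$; (3)~interpret each coordinate $z_j$ as a period ratio of $B_z$ on the $j$th embedding of $\mathbb F$; (4)~apply the transcendence theorem of \cite{Co1} in the form packaged by \cite{SW1}, Proposition~4, to conclude that $z_j\in\overline{\mathbb Q}$ for some $j$ together with $B_z$ being over $\overline{\mathbb Q}$ forces $B_z$ to have CM; (5)~conversely note the ``only if'' direction is the classical period relation plus the fact that $J$ is algebraic at CM points. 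The main obstacle — though for our purposes it is entirely a matter of correct citation rather than fresh proof — is step (4): it rests on a nontrivial application of the analytic subgroup theorem (W\"ustholz) to the period lattice of $B_z$, which is exactly what \cite{Co1} supplies, and the only thing we must verify carefully is that the normalization $\varphi_1(\tau)=\tau$ and the commensurability of $\Gamma_{\mathbb F}$ with ${\rm PSL}(2,\mathcal O)$ are compatible with the hypotheses under which \cite{SW1}, Proposition~4 is stated, so that quoting it is legitimate in our slightly more general (possibly non-arithmetic source group) setting.
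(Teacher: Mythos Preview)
Your proposal is correct and aligns with the paper's treatment: the paper does not give an independent proof of this proposition but simply records it as a direct consequence of \cite{Co1} together with \cite{SW1}, Proposition~4, \S4, which is exactly the citation you identify as the ``cleanest way to package'' the argument. Your additional unpacking of what lies behind those references (period ratios, the analytic subgroup theorem via W\"ustholz, the $\overline{\mathbb Q}$-definability of $B_z$ from $J(z)\in V(\overline{\mathbb Q})$) is accurate and more explicit than what the paper provides, but the approach is the same.
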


 Applying, from  \cite{CoWo1}, the Theorem in \S2, the construction of \S3, Part 3, (see also Proposition 1, \S4), we can choose projective coordinates for $J(\varphi(\tau))$, $\tau\in{\mathcal H}$, that are polynomials, not all constant and with algebraic coefficients, evaluated at $J_\Gamma(\tau)$. 
 
 The discussion of this section yields the following characterization of CM points $\tau\in{\mathcal H}$. It generalizes the well-known result of Th. Schneider \cite{Sch1} on the values of the elliptic modular function.
 
 \begin{proposition} \label{p:schneider}
The point $\tau\in\mathcal{H}$ is a CM point if and only if we have both $\tau\in{\overline{\mathbb{Q}}}$ and $J_{\Gamma}(\tau)\in{\overline{\mathbb{Q}}}$. 
 \end{proposition}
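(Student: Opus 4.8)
The plan is to deduce the statement from Proposition~\ref{p:CMcharac} by pulling it back along the modular embedding $\varphi$, using the two facts recalled just above it: that $\varphi_1(\tau)=\tau$ for all $\tau\in\mathcal{H}$, and that one may choose projective coordinates for $J(\varphi(\tau))$ of the form $[P_0(J_\Gamma(\tau)):\cdots:P_N(J_\Gamma(\tau))]$ with $P_0,\ldots,P_N\in\overline{\mathbb{Q}}[X]$ not all constant. In other words $\tau\mapsto J(\varphi(\tau))$ factors as $\psi\circ J_\Gamma$, where $\psi$ is a morphism, defined over $\overline{\mathbb{Q}}$, from the $J_\Gamma$-line to $V$; and $\psi$ is non-constant because the $P_j$ are not all constant (equivalently, because $\varphi$ is a non-constant embedding whose image meets more than one $\Gamma_{\mathbb{F}}$-orbit). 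Recall also that, by definition, $\tau$ is a CM point if and only if $\varphi(\tau)\in\mathcal{H}^g$ is one.

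For the ``if'' direction, suppose $\tau\in\overline{\mathbb{Q}}$ and $J_\Gamma(\tau)\in\overline{\mathbb{Q}}$. Then the first coordinate $\varphi_1(\tau)=\tau$ of $z=\varphi(\tau)$ is algebraic, and $J(z)=\psi(J_\Gamma(\tau))$ is an algebraic point of $V$ since $\psi$ has algebraic coefficients and $J_\Gamma(\tau)$ is algebraic. Both hypotheses of Proposition~\ref{p:CMcharac} hold for $z$, so $z=\varphi(\tau)$, and hence $\tau$, is a CM point.

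For the ``only if'' direction, suppose $\tau$ is a CM point, so that $\varphi(\tau)$ corresponds to an abelian variety with complex multiplication. First, $J(\varphi(\tau))\in V(\overline{\mathbb{Q}})$ because $J$ takes algebraic values at all CM points of $\mathcal{H}^g$. As $\psi$ is a non-constant morphism of curves defined over $\overline{\mathbb{Q}}$, it is quasi-finite onto its image, so the fibre of $\psi$ over the algebraic point $J(\varphi(\tau))$ is a finite set of algebraic points; since $J_\Gamma(\tau)$ lies in that fibre, $J_\Gamma(\tau)\in\overline{\mathbb{Q}}$. It remains to show $\tau\in\overline{\mathbb{Q}}$, and here I would appeal to the theory of complex multiplication: the period point in $\mathcal{H}^g$ of a CM abelian variety of the type parametrized by $V$ has all of its coordinates in $\overline{\mathbb{Q}}$ (indeed it is the image of an element of a CM field $\mathbb{L}\supseteq\mathbb{F}$ under the archimedean places extending those of $\mathbb{F}$). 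Since any two lifts to $\mathcal{H}^g$ of a given CM point of $V(\mathbb{C})$ differ by an element of $\Gamma_{\mathbb{F}}\subset{\rm PSL}(2,\mathbb{F})$, which acts on $\mathcal{H}^g$ coordinatewise by fractional linear transformations with coefficients in $\overline{\mathbb{Q}}$ and therefore preserves $\overline{\mathbb{Q}}^g\cap\mathcal{H}^g$, the particular lift $\varphi(\tau)$ also has all coordinates in $\overline{\mathbb{Q}}$; in particular $\tau=\varphi_1(\tau)\in\overline{\mathbb{Q}}$.

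I expect the main obstacle to be precisely this last step of the ``only if'' direction: upgrading ``some coordinate of $\varphi(\tau)$ is algebraic,'' which is all Proposition~\ref{p:CMcharac} gives directly in the CM direction, to the algebraicity of the distinguished coordinate $\varphi_1(\tau)=\tau$ itself. This is the point at which the arithmetic of the modular embedding, and of the fields $\mathbb{F}$ and $\mathbb{L}$, must genuinely enter, rather than the transcendence-theoretic criterion of Proposition~\ref{p:CMcharac} alone. The descent of algebraicity along $\psi$, and the whole ``if'' direction, are by comparison routine once $\psi$ is known to be non-constant.
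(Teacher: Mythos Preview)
Your proof is correct and follows essentially the same route as the paper's: for the ``if'' direction you use $\varphi_1=\mathrm{id}$ together with the $\overline{\mathbb{Q}}$-polynomial expression of $J\circ\varphi$ in terms of $J_\Gamma$ to feed Proposition~\ref{p:CMcharac}; for the ``only if'' direction you invoke classical CM theory to get all $\varphi_i(\tau)\in\overline{\mathbb{Q}}$ (hence $\tau\in\overline{\mathbb{Q}}$), and the non-constancy of $\psi$ to descend algebraicity of $J(\varphi(\tau))$ to $J_\Gamma(\tau)$. The paper does exactly this, phrasing the descent step as ``$P(J_\Gamma(\tau))\in\overline{\mathbb{Q}}$ for some non-constant $P\in\overline{\mathbb{Q}}[x]$, hence $J_\Gamma(\tau)\in\overline{\mathbb{Q}}$,'' and citing Shimura--Taniyama directly for the algebraicity of all $\varphi_i(\tau)$, so your extra remark about $\Gamma_{\mathbb{F}}$ preserving $\overline{\mathbb{Q}}^g\cap\mathcal{H}^g$ is harmless but unnecessary.
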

 
 \begin{proof}  Let $\tau\in\mathcal{H}$ be a CM point. Then, by well-known facts from the theory of complex multiplication, see \cite{ShTa}, we have $\varphi_i(\tau)\in{\overline{\mathbb Q}}$, for all $i=1,\ldots,g$. Therefore $\varphi_1(\tau)=\tau\in{\overline{\mathbb Q}}$. We also have $J(\varphi(\tau))\in V(\overline{\mathbb{Q}})$. By the comments preceding this proposition, we deduce that $P(J_\Gamma(\tau))\in{\overline{\mathbb{Q}}}$ for some non-constant polynomial $P(x)\in{\overline{\mathbb{Q}}}[x]$. It follows that $J_\Gamma(\tau)\in{\overline{\mathbb{Q}}}$.

Conversely, suppose $\tau=\varphi_1(\tau)\in{\overline{\mathbb{Q}}}$ and $J_{\Gamma}(\tau)\in{\overline{\mathbb{Q}}}$. By the Theorem in \cite{CoWo1}, \S2, we have $J(\varphi(\tau))\in V(\overline{{\mathbb Q}})$. From Proposition \ref{p:CMcharac}, we deduce that $\varphi(\tau)$ is a CM point. Therefore, by definition, the point $\tau$ is CM.
 \end{proof}
 
 \section{Statement and Proof of Main Results}\label{s:mainresults}
 
 We can now state and prove our main results. They generalize to arbitrary triangle groups with cusps the results for $\Gamma(2,3,\infty)$ given by Theorem 1 and Theorem 2 of \cite{GMR1}.
 
 \begin{theorem}\label{t:zeros}
Let $2\le p\le q\le \infty$ with $\frac1p+\frac1q<1$. Let $f$ be a non-zero holomorphic automorphic form of weight $k>0$ with respect to $\Gamma(p,q,\infty)$ whose Fourier expansion lies in the ring ${\overline{\mathbb Q}}[\![\widetilde{q}_3]\!]$. Then any zero of $f$ is either a CM point or is a transcendental number.
 \end{theorem}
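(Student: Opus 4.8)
The plan is to follow the strategy of \cite{GMR1}, Theorem~1, replacing the classical pair $(j,\Delta)$ by the pair $(J_\Gamma,\Delta_{2L})$ furnished by \cite{DGMS}, and using Proposition~\ref{p:schneider} to recognize when the resulting algebraic point is a CM point. Let $\tau_0\in\mathcal{H}$ be a zero of $f$. If $\tau_0$ is transcendental there is nothing to prove, so assume $\tau_0\in\overline{\mathbb{Q}}$; it then suffices, by Proposition~\ref{p:schneider}, to prove that $J_\Gamma(\tau_0)\in\overline{\mathbb{Q}}$.

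The key step is to pass to weight zero. Consider $\Psi:=f^{2L}/\Delta_{2L}^{\,k}$, where $f^{2L}$ and $\Delta_{2L}^{\,k}$ are holomorphic automorphic forms of the same weight $2Lk$, so that $\Psi$ is invariant under $\Gamma(p,q,\infty)$. As $f$ is holomorphic on $\mathcal{H}$ and $\Delta_{2L}$ has no zeros there, $\Psi$ is holomorphic on $\mathcal{H}$; and since $\Delta_{2L}$ also has no zeros at the cusps outside the orbit $\Gamma(i\infty)$, while $f$ is holomorphic at all cusps, $\Psi$ is meromorphic at the cusps, with a possible pole only over $\zeta_3=i\infty$. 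Hence $\Psi$ is an automorphic function, so $\Psi=R(J_\Gamma)$ for some $R\in\mathbb{C}(x)$; and because $J_\Gamma$ takes its only pole at $\zeta_3$ and finite values at the elliptic points and remaining cusps, $R$ has no finite pole, i.e.\ $R\in\mathbb{C}[x]$. If $R$ were constant, $f^{2L}$ would be a non-zero scalar multiple of $\Delta_{2L}^{\,k}$, so $f$ would have no zeros and the theorem would be vacuous; thus we may assume $\deg R\ge 1$. To see that $R$ has algebraic coefficients, note that $f\in\overline{\mathbb{Q}}[\![\widetilde q_3]\!]$ gives $f^{2L}\in\overline{\mathbb{Q}}[\![\widetilde q_3]\!]$, while by (\ref{e:delta}) one has $\Delta_{2L}=\widetilde q_3^{\,n_\Delta}\bigl(1+O(\widetilde q_3)\bigr)$ with rational coefficients, so $\Delta_{2L}^{-k}\in\widetilde q_3^{\,-kn_\Delta}\mathbb{Q}[\![\widetilde q_3]\!]$ and $\Psi$ has a $\widetilde q_3$-Laurent expansion over $\overline{\mathbb{Q}}$. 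Since $J_\Gamma\in\widetilde q_3^{\,-1}+\overline{\mathbb{Q}}[\![\widetilde q_3]\!]$, matching in $\Psi=R(J_\Gamma)$ the coefficients of $\widetilde q_3^{\,-\deg R},\widetilde q_3^{\,-\deg R+1},\dots$ recovers the coefficients of $R$ one at a time as algebraic numbers, so $R\in\overline{\mathbb{Q}}[x]$.

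Now evaluate the identity $f^{2L}=R(J_\Gamma)\,\Delta_{2L}^{\,k}$ at $\tau_0$: since $f(\tau_0)=0$ and $\Delta_{2L}(\tau_0)\ne 0$, we get $R(J_\Gamma(\tau_0))=0$, and as $R$ is a non-zero polynomial with algebraic coefficients, $J_\Gamma(\tau_0)\in\overline{\mathbb{Q}}$; together with $\tau_0\in\overline{\mathbb{Q}}$, Proposition~\ref{p:schneider} gives that $\tau_0$ is a CM point. (In particular the elliptic fixed points $\zeta_1,\zeta_2$, being roots of unity with $J_\Gamma\in\{0,1\}$, are covered without a separate argument.) The one non-routine ingredient is the identity $\Psi=R(J_\Gamma)$ with $R\in\overline{\mathbb{Q}}[x]$: it relies on the three properties of $\Delta_{2L}$ recorded in \cite{DGMS}, namely that it has no zeros on $\mathcal{H}$, no zeros at cusps off $\Gamma(i\infty)$, and a $\widetilde q_3$-expansion with rational coefficients and known leading term, combined with the $\widetilde q_3$-expansion principle. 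It is precisely here that the natural parameter $\widetilde q_3$ must be used in place of $\exp(2\pi i\tau/h_3)$, since in the non-arithmetic case the latter does not produce algebraic Fourier coefficients and the argument would collapse.
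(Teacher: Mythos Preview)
Your proof is correct and follows essentially the same route as the paper: form $\Psi=f^{2L}/\Delta_{2L}^{\,k}$, argue it is a polynomial in $J_\Gamma$ with algebraic coefficients via the $\widetilde q_3$-expansion, and then invoke Proposition~\ref{p:schneider}. You supply more detail than the paper (why $R$ is a polynomial rather than a rational function, the explicit coefficient-matching argument for $R\in\overline{\mathbb Q}[x]$, the disposal of the constant case), but the strategy is identical.
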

 
\begin{proof} Let $f$ be a non-zero holomorphic automorphic form of weight $k>0$ with respect to $\Gamma(p,q,\infty)$ whose Fourier expansion lies in the ring ${\overline{\mathbb Q}}[\![\widetilde{q}_3]\!]$. We defined $\Delta_{2L}$ in (\ref{e:delta}). Let $g$ be the function defined by
$$
g(\tau)=\frac{f^{2L}(\tau)}{\Delta_{2L}^k(\tau)},\qquad \tau\in{\mathcal H}.
$$ 
Then $g$ is an automorphic function (weight $0$) with no poles in $\mathcal{H}$, nor at the $\Gamma(p,q,\infty)$-orbits of the cusps not in $\Gamma(p,q,\infty)(i\infty)$. The function $g$ has a Fourier expansion in powers of $\widetilde{q}_3$ (where we allow negative integer powers) and with coefficients in ${\overline{\mathbb Q}}$. Therefore $g=P(J_\Gamma(\tau))$ for some $P(x)\in{\overline\mathbb{Q}}[x]$ which is not identically zero. If $\alpha$ is a zero of $f$, we have $g(\alpha)=P(J_\Gamma(\alpha))=0$, and it follows that $J_\Gamma(\alpha)$ is an algebraic number. From Proposition \ref{p:schneider}, it follows that $\alpha$ is either a CM point or is a transcendental number, as required.
\end{proof}

Following \cite{GMR1}, we define two automorphic forms $f_1$, $f_2$ with respect to the group $\Gamma(p,q,\infty)$ to be equivalent, denoted $f_1\sim f_2$, if there are natural numbers $k_1$ and $k_2$ such that we have the equality of functions $f^{k_1}=\lambda f^{k_2}$, for some $\lambda\in{\overline{\mathbb{Q}}}$, $\lambda\not=0$. We have the following generalization of \cite{GMR1}, Theorem 2.

\begin{theorem}\label{t:foverdelta} Let $f$ be a non-zero holomorphic automorphic form of weight $k>0$ with respect to $\Gamma(p,q,\infty)$ whose Fourier expansion lies in the ring ${\overline{\mathbb Q}}[\![\widetilde{q}_3]\!]$. Suppose that $f$ is not equivalent to $\Delta_{2L}$. If $\alpha\in\mathcal{H}\cap{\overline{\mathbb Q}}$ and $f^{2L}(\alpha)/\Delta_{2L}^k(\alpha)\in{\overline{\mathbb Q}}$, then $\alpha$ is a CM point.
\end{theorem}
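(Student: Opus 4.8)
The plan is to reuse the automorphic function $g(\tau)=f^{2L}(\tau)/\Delta_{2L}^k(\tau)$ from the proof of Theorem \ref{t:zeros} and to squeeze out of the two hypotheses the single fact that $J_\Gamma(\alpha)$ is algebraic; once that is in hand, Proposition \ref{p:schneider} finishes the job. First I would note, exactly as in the proof of Theorem \ref{t:zeros}, that $g$ is an automorphic function of weight $0$, holomorphic on $\mathcal H$ and at every cusp outside the orbit $\Gamma(p,q,\infty)(i\infty)$ (because $\Delta_{2L}$ is non-vanishing there and $f$ is holomorphic), and that its Fourier expansion in powers of $\widetilde q_3$ — now possibly with finitely many negative powers, coming from the leading term $\widetilde q_3^{\,k n_\Delta}$ of $\Delta_{2L}^k$ — has coefficients in $\overline{\mathbb Q}$, since inverting a power series with algebraic coefficients and leading coefficient $1$ preserves algebraicity of the coefficients. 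These properties force $g=P(J_\Gamma)$ for some non-zero $P(x)\in\overline{\mathbb Q}[x]$, the coefficients of $P$ being algebraic by comparing the $\widetilde q_3$-expansions of $g$ and of $J_\Gamma$.

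The decisive new point, compared with Theorem \ref{t:zeros}, is that here $P$ must be \emph{non-constant}. Indeed, if $P$ were constant then $g$ would equal a constant $c$ (necessarily $c\ne 0$, as $f\ne 0$), yielding the identity of functions $f^{2L}=c\,\Delta_{2L}^k$; but this is precisely the relation $f\sim\Delta_{2L}$ in the sense defined just before the theorem, contradicting the hypothesis. Hence $\deg P\ge 1$.

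Now since $\Delta_{2L}$ has no zeros in $\mathcal H$, the value $g(\alpha)=f^{2L}(\alpha)/\Delta_{2L}^k(\alpha)$ is a well-defined complex number, assumed to lie in $\overline{\mathbb Q}$, and it equals $P(J_\Gamma(\alpha))$. Because $P$ is a non-constant polynomial with algebraic coefficients, $J_\Gamma(\alpha)$ is a root of the non-zero polynomial $P(x)-P(J_\Gamma(\alpha))\in\overline{\mathbb Q}[x]$, so $J_\Gamma(\alpha)\in\overline{\mathbb Q}$. Together with the hypothesis $\alpha\in\mathcal H\cap\overline{\mathbb Q}$, Proposition \ref{p:schneider} then gives that $\alpha$ is a CM point, as required.

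I do not expect a serious obstacle: the genuine content is already concentrated in Proposition \ref{p:schneider} — the Schneider-type characterization of CM points built on the modular embedding of \cite{CoWo1} — and in the structural description of $\Delta_{2L}$ and $J_\Gamma$ taken from \cite{DGMS}. The only points that need care are the bookkeeping that produces $g=P(J_\Gamma)$ with $P\in\overline{\mathbb Q}[x]$, and, more pointedly, the realization that the exclusion $f\not\sim\Delta_{2L}$ is exactly what guarantees $\deg P\ge 1$; without this hypothesis the conclusion is simply false, since then $g$ carries no information whatsoever about $J_\Gamma(\alpha)$.
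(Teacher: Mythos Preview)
Your argument is correct and follows exactly the same route as the paper: form $g=f^{2L}/\Delta_{2L}^k$, write it as $P(J_\Gamma)$ with $P\in\overline{\mathbb Q}[x]$, use the hypothesis $f\not\sim\Delta_{2L}$ to ensure $P$ is non-constant, and then conclude via Proposition~\ref{p:schneider}. You have in fact spelled out more carefully than the paper does why $P$ must be non-constant and why $P(J_\Gamma(\alpha))\in\overline{\mathbb Q}$ forces $J_\Gamma(\alpha)\in\overline{\mathbb Q}$.
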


\begin{proof}  As in the proof of Theorem \ref{t:zeros}, we deduce that $f^{2L}(\alpha)/\Delta_{2L}^k(\alpha)$ is a non-constant polynomial in $J_\Gamma(\alpha)$ with algebraic coefficients and therefore that $J_\Gamma(\alpha)$ is algebraic. By Proposition \ref{p:schneider}, we deduce that $\alpha$ is a CM point.
\end{proof}

\noindent \scriptsize{Department of Mathematics, Texas A\&M University, College Station, TX 77843-3368, USA; CNRS, UMR 8524, Universit\'e de Lille 1, Cit\'e Scientifique, 59655 Villeneuve d'Ascq, France

\noindent \emph{email:} paulatretkoff@tamu.edu}

\end{document}